\def\CC{\mathbb C} 
\def\RR{\mathbb R}
\def\HH{\mathbb H}
\def\Sp{\mathbb S}
\def\db{\overline\partial} 
\def\OO{\Omega}
\def\bO{\partial\Omega}
\def\cO{\overline\Omega}
\def\dd#1#2{\dfrac{\partial#1}{\partial#2}} 
\def\pr{$\psi$-regular}
\theoremstyle{theorem}
\newtheorem{theorem}{Theorem}
\newtheorem{proposition}[theorem]{Proposition}
\theoremstyle{remark}
\newtheorem{remark}{Remark}
\theoremstyle{example}
\newtheorem{example}{Example}
\begin{document}

\title[Holomorphic functions and  regular quaternionic functions]{Holomorphic functions and  regular quaternionic functions on the hyperk\" ahler space 
$\HH$}

\author{A. PEROTTI}
\thanks{\uppercase{W}ork partially supported by \uppercase{MIUR (PRIN P}roject ``\uppercase{P}ropriet\`a geometriche delle variet\`a reali e com\-ples\-se") and \uppercase{GNSAGA} of \uppercase{INdAM}}

\address{Department of Mathematics\\
  University of Trento\\ Via Sommarive, 14\\ I-38050 Povo Trento ITALY\\
E-mail: perotti@science.unitn.it}

\maketitle

\begin{abstract}{Let $\HH$ be the space of quaternions, with its standard hypercomplex structure.
  Let $\mathcal R(\Omega)$ be the module of \emph{$\psi$-regular} functions on $\Omega$. 
For every unitary vector $p$ in $\Sp^2\subset\HH$, $\mathcal R(\Omega)$ contains the space  of holomorphic functions w.r.t.\ the complex structure $J_p$ induced by $p$.
We prove the existence, on any bounded domain $\Omega$, of $\psi$-regular functions that are not $J_p$-holomorphic for any $p$. Our starting point is a result of Chen and Li concerning maps between hyperk\"ahler manifolds, where a similar result is obtained for a less restricted class of quaternionic maps. We give a criterion, based on the energy-minimizing property of holomorphic maps, that distinguishes $J_p$-holomorphic functions among $\psi$-regular functions.}
\end{abstract}

\parindent=18pt
{\small{\bfseries Key words:} Quaternionic  regular functions, hypercomplex structure, hyperk\"ahler space}

{\small{\bfseries Mathematics Subject Classification:} Primary 32A30; Secondary  53C26, 30G35}
\parindent=12pt

\section{Introduction}
\label{sec:Introduction}
Let $\HH$ be the space of quaternions, with its standard hypercomplex structure given by the complex structures $J_1,J_2$ on $T\HH\simeq\HH$ defined by left multiplication by $i$ and $j$. Let  
  $J_1^*,J_2^*$ be the dual structures on $T^*\HH$.  
  
  We consider  the module $\mathcal R(\Omega)=\{f=f_1+f_2j\ |\ \overline\partial f_1=J_2^*(\partial\overline{f_2})\textrm{ on }\Omega\}$ of left \emph{$\psi$-regular} functions on $\Omega$. 
  These functions are in a simple correpondence with Fueter left regular functions, since they can be obtained from them by means of a real coordinate reflection in $\HH$.
 They have been studied by many authors (see for instance Sudbery\cite{Su}, Shapiro and Vasilevski\cite{SV} and N\=ono\cite{No}). \
The space $\mathcal R(\OO)$ contains the identity mapping and any holomorphic mapping $(f_1,f_2)$ on $\Omega$ defines a \pr\ function $f=f_1+f_2j$. This is no more true if we replace the class of $\psi$-regular functions with that of regular functions. The definition of $\psi$-regularity is also equivalent to that of \emph{$q$-holomorphicity} given by Joyce\cite{J} in the setting of hypercomplex manifolds. 

For every unitary vector $p$ in $\Sp^2\subset\HH$, $\mathcal R(\Omega)$ contains the space  $Hol_p(\Omega,{\HH})=\{f:\Omega\rightarrow{\HH}\ |\ df+pJ_p(df)=0 \textrm{ on }\Omega\}$ of holomorphic functions w.r.t.\ the complex structure $J_p=p_1J_1+p_2J_2+p_3J_3$ on $\Omega$ and to the structure induced on $\HH$ by left-multiplication by $p$  (\emph{$J_p$-holomorphic} functions on $\Omega$). 

We show that on every domain $\OO$ there exist $\psi$-regular functions that are not $J_p$-holomorphic for any $p$. A similar result was obtained by Chen and Li\cite{CL} for the larger class of \emph{q-maps} between hyperk\"ahler manifolds. 

This result is a consequence of a criterion (cf.\ Theorem \ref{criterion}) of $J_p$-holomorphicity, which is obtained using the energy-minimizing property of \pr\ functions (cf.\ Proposition \ref{energy}) and ideas of Lichnerowicz\cite{L} and Chen and Li\cite{CL}. 

In Sec.\ \ref{sec:OtherApplications} we give some other applications of the criterion. In particular, we show that if $\OO$ is connected, then the intersection $Hol_p(\OO,\HH)\cap Hol_{p'}(\OO,\HH)$ ($p\ne\pm p'$) contains only affine maps. This result is in accord with what was proved by Sommese\cite{So} about \emph{quaternionic maps} (cf.\ Sec.\ \ref{sec:QuaternionicMaps} for definitions).
   
\section{Fueter-regular and $\psi$-regular functions}
\subsection{Notations and definitions}
\label{sec:NotationsAndDefinitions}
We identify   the space $\CC^2$ with the set $\HH$ of quaternions by means of the mapping that associates  the pair $(z_1,z_2)=(x_0+ix_1,x_2+ix_3)$ with the quaternion $q=z_1+z_2j=x_0+ix_1+jx_2+kx_3\in\HH$. Let   $\OO$  be a bounded domain in $\HH\simeq \CC^2$.
    A quaternionic function $f=f_1+f_2j\in C^1(\OO)$ is \emph{(left) {regular}} on $\OO$ (in the sense of Fueter) if \[{\mathcal D}f=\dd{f }{x_0}+i\dd{f}{x_1}+j\dd{f }{x_2}+k\dd{ f}{x_3}=0 \text{\quad on $\OO$.}\]
Given the   ``structural vector'' $\psi=(1,i,j,-k)$,  $f$ is called \emph{(left) {$\psi$-regular}}  on $\OO$ if \[{\mathcal D'}f=\dd{f }{x_0}+i\dd{f}{x_1}+j\dd{f }{x_2}-k\dd{ f}{x_3}=0 \text{\quad on $\OO$}.\]
   
We recall some properties of regular functions, for which we refer to the papers of Sudbery\cite{Su}, Shapiro and Vasilevski\cite{SV} and N\=ono\cite{No}: 
  \begin{enumerate}
  \item  
  $f$ is $\psi$-regular $\Leftrightarrow\quad \dd{f_1}{\bar z_1}=\dd{\overline {f_2}}{z_2},\quad \dd {f_1}{\bar z_2}=-\dd{\overline {f_2}}{z_1}$.	     		\item
  Every holomorphic map $(f_1,f_2)$ on $\OO$ defines a $\psi$-regular function $f=f_1+f_2j$.
  \item
  The complex components are both holomorphic or both non-holomorphic.
  \item
    Every regular or $\psi$-regular function is harmonic.
  \item
  If $\OO$ is pseudoconvex, every  complex harmonic function  is the complex component of a $\psi$-re\-gular function   on $\OO$.
  \item
  The space $\mathcal R(\OO)$ of \pr\ functions on $\OO$ is a \emph{right} $\HH$-module
  with integral representation formulas.
  \end{enumerate}

\subsection{q-holomorphic functions}
\label{sec:qHolomorphicFunctionsJoyce}
A definition equivalent to $\psi$-regularity has been given by Joyce\cite{J} in the setting of hypercomplex manifolds. Joyce introduced the module of \emph{q-holomorphic} functions  on a hypercomplex manifold.  On this module he defined a (commutative) product. 
A hypercomplex structure on the manifold  $\HH$ is given by the complex structures   
  $J_1,J_2$ on $T\HH\simeq\HH$ defined by left multiplication by $i$ and $j$. Let  
  $J_1^*,J_2^*$ be the dual structures on $T^*\HH$.   In complex coordinates 
  \[
  \begin{cases}
  J_1^*dz_1=i\, dz_1,\ &J_1^*dz_2=i\, dz_2\\ J_2^*dz_1=-d\bar z_2,\ &J_2^*dz_2={d\bar z_1}\\ 
  J_3^*dz_1=i\,d\bar z_2,\ &J_3^*dz_2=-i\,{d\bar z_1}
  \end{cases}\]
  where we make the choice $J_3^*=J_1^*J_2^*\Rightarrow J_3=-J_1J_2$.
   
A function $f$ is $\psi$-regular if and only if $f$ is \emph{q-holomorphic}, i.e. 
\[df+iJ_1^*(df)+jJ_2^*(df)+kJ_3^*(df)=0.\]

In complex components $f=f_1+f_2j$, we can rewrite the  equations of $\psi$-regularity as
\[\db f_1=J_2^*(\partial\overline f_2).\]

\section{Holomorphic maps}
\label{sec:HolomorphicMaps}
\subsection{Holomorphic functions w.r.t.\  a complex structure $J_p$}
\label{sec:HolomorphicFunctionsWRTAComplexStructureJP}
Let $J_p=p_1J_1+p_2J_2+p_3J_3$ be the complex structure  on $\HH$ defined by a unit imaginary quaternion $p=p_1i+p_2j+p_3k$ in the sphere $\Sp^2=\{p\in\HH\ |\ p^2=-1\}$. 
It is well-known that every complex structure   compatible with the standard hyperk\"ahler structure of $\HH$ is of this form.
    If $f=f^0+if^1:\OO\rightarrow\CC$ is a \emph{$J_p$-holomorphic} function,  i.e. $df^0=J_p^*(df^1)$ or, equivalently, $df+iJ_p^*(df)=0$,
  then $f$ defines a $\psi$-regular function $\tilde f=f^0+pf^1$ on $\OO$. 
 We can identify  $\tilde f$ with a holomorphic function
 \[ \tilde f:(\OO,J_p)\rightarrow(\CC_p,L_p)\]
  where $\CC_p=\langle 1,p\rangle$ is a copy of $\CC$ in $\HH$ and $L_p$ is the complex structure defined on $T^*\CC_p\simeq\CC_p$  by left multiplication by $p$. 

More generally, we can consider the space of holomorphic maps from $(\OO,J_p)$ to $(\HH,L_p)$
\[Hol_p(\OO,\HH)=\{f:\OO\rightarrow\HH\ |\ \db_pf=0 \text{ on }\OO\}=Ker\db_p\]
 (the \emph{$J_p$-holomorphic maps} on $\OO$)
where $\db_p$ is the Cauchy-Riemann operator w.r.t.\  the structure $J_p$
\[\db_p=\frac12\left(d+pJ_p^*\circ d\right).\]

For any positive orthonormal basis $\{1,p,q,pq\}$ of $\HH$ ($p,q\in \Sp^2$), the equations of $\psi$-regularity can be rewritten in complex form as 
\[\db_p f_1=J_q^*(\partial_p\overline f_2),\]
 where $f=(f^0+pf^1)+(f^2+pf^3)q=f_1+f_2q$.
Then every $f\in Hol_p(\OO,\HH)$ is a \pr\ function on $\OO$.

\begin{remark}
\label{sec:properties}
1)	The \emph{identity} map is in $Hol_i(\OO,\HH)\cap Hol_j(\OO,\HH)$, but not in $Hol_k(\OO,\HH)$.
	
2) 	$Hol_{-p}(\OO,\HH)=Hol_p(\OO,\HH)$ 
	
3) If $f\in Hol_p(\OO,\HH)\cap Hol_{p'}(\OO,\HH)$, with $p\ne\pm p'$, then $f\in Hol_{p''}(\OO,\HH)$ for every $p''=\frac{\alpha p+\beta p'}{\|\alpha p+\beta p'\|}$.
	
4) $\psi$-regularity distinguishes between holomorphic and anti-holomorphic maps: if $f$ is an \emph{anti-holomorphic} map from $(\OO,J_p)$ to $(\HH,L_p)$, then $f$ can be \pr\ or not.
	For example, $f=\bar z_1+\bar z_2j\in Hol_j(\OO,\HH)\cap Hol_k(\OO,\HH)$ is a \pr\ function induced by the anti-holomorphic map \[(\bar z_1,\bar z_2):(\OO,J_1)\rightarrow (\HH,L_i),\] while	 
	 $(\bar z_1,0):(\OO,J_1)\rightarrow (\HH,L_i)$ induces the function $g=\bar z_1\notin \mathcal R(\OO)$.
\end{remark}

\subsection{Quaternionic maps}
\label{sec:QuaternionicMaps}
A particular class of $J_p$-holomorphic maps is constituted by the \emph{quaternionic maps} on the quaternionic manifold $\OO$. Sommese\cite{So} defined quaternionic maps between hypercomplex manifolds:   a quaternionic map is a map \[f:(X,J_1,J_2)\rightarrow (Y,K_1,K_2)\] that is holomorphic from $(X,J_1)$ to $(Y,K_1)$ \emph{and} from $(X,J_2)$ to $(Y,K_2)$.
	
	In particular, a quaternionic map \[f:(\OO,J_1,J_2)\rightarrow (\HH,J_1,J_2)\] is an element of $Hol_i(\OO,\HH)\cap Hol_j(\OO,\HH)$ and then a \pr\ function on $\OO$.	
	Sommese showed that  quaternionic maps are {affine}. They appear for example as transition functions for 4-dimensional \emph{quaternionic manifolds}.

\section{Non-holomorphic \pr\ maps}
\label{sec:NonHolomorphicPrMaps}
A natural question can now be raised: can $\psi$-regular maps  always be  made holomorphic by rotating the complex structure or do they constitute a new class of harmonic maps? In other words, does the space $\mathcal R(\OO)$ contain the union \[\bigcup_{p\in \Sp^2} Hol_p(\OO,\HH)\] properly?

Chen and Li\cite{CL} posed and answered the analogous question for the larger class of \emph{q-maps} between hyperk\"ahler manifolds. 
	In their definition, the complex structures of the source and target manifold can rotate \emph{independently}. This implies that also anti-holomorphic maps are q-maps.

\subsection{Energy and regularity}
  The \emph{energy} (w.r.t.\  the euclidean metric $g$) of a  map $f:\OO\rightarrow\CC^2\simeq\HH$, of class $C^1(\cO)$,  is the integral
  \[\mathcal E(f)=\frac12\int_\OO\|df\|^2dV=\frac12\int_\OO \langle g,f^*g\rangle dV=\frac12\int_\OO tr(J_\CC(f)
  \overline{J_\CC(f)}^T)dV,\]
  where $J_\CC(f)$ is the Jacobian matrix of $f$ with respect to the coordinates $\bar z_1,z_1,\bar z_2,z_2$.

Lichnerowicz\cite{L} proved that holomorphic maps between K\"ahler manifolds minimize the energy functional in their homotopy classes.
Holomorphic maps $f$ smooth on $\cO$ minimize energy in the homotopy class constituted by maps $u$ with $u_{|\bO} = f_{|\bO}$ which are homotopic to $f$ relative to $\bO$.

  From the theorem, functions $f\in Hol_p(\OO,\HH)$ minimize the energy functional in their homotopy classes (relative to $\bO$).  More generally:
  \begin{proposition}\label{energy}
  If $f$ is \pr\ on $\OO$, then it minimizes energy in its homotopy class (relative to $\bO$).
  \end{proposition}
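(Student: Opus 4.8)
The plan is to adapt the Lichnerowicz--Chen--Li scheme: write the energy as the sum of a term depending only on the boundary values $u|_{\bO}$ and a manifestly non-negative term that vanishes exactly when the $\psi$-regularity equations of property (1) hold. Normalizing so that (this agrees with the definition in the text up to a positive constant)
\[\mathcal E(u)=\int_\OO\big(\|\partial u\|^2+\|\db u\|^2\big)\,dV,\quad \|\partial u\|^2=\sum_{k,l}\left|\dd{u_l}{z_k}\right|^2,\quad \|\db u\|^2=\sum_{k,l}\left|\dd{u_l}{\bar z_k}\right|^2,\]
the key observation is that the equations $\dd{u_1}{\bar z_1}=\dd{\overline{u_2}}{z_2}$, $\dd{u_1}{\bar z_2}=-\dd{\overline{u_2}}{z_1}$ constrain only the antiholomorphic derivatives, so the defect will be assembled from these alone.

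Next I would introduce two closed $2$-forms on $\CC^2$: the K\"ahler form $\omega_1=\tfrac{i}{2}(dz_1\wedge d\bar z_1+dz_2\wedge d\bar z_2)$ of $J_1$, and the complex symplectic form $\sigma=dz_1\wedge dz_2$, which is proportional to $\omega_2+i\omega_3$ and thus carries the remaining part of the hyperk\"ahler structure. Since $\CC^2$ is contractible, both admit global primitives, so for every $u\in C^1(\cO)$ the $4$-forms $u^*\omega_1\wedge\omega_1$ and $u^*\sigma\wedge\sigma$ are exact; by Stokes each of their integrals over $\OO$ equals an integral over $\bO$ of a primitive pulled back by $u$, hence depends only on $u|_{\bO}$ (no hypothesis on $\OO$ is needed). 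On the other hand a direct computation in the coordinates $z_1,z_2$ evaluates them as
\[\int_\OO u^*\omega_1\wedge\omega_1=\int_\OO\big(\|\partial u\|^2-\|\db u\|^2\big)\,dV,\qquad \int_\OO u^*\sigma\wedge\sigma=4\int_\OO\left(\dd{u_1}{\bar z_1}\dd{u_2}{\bar z_2}-\dd{u_1}{\bar z_2}\dd{u_2}{\bar z_1}\right)dV,\]
the second integrand being the $(0,2)$-Jacobian of $u$.

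Combining the two identities, I would expand the squares in
\[D(u)=\int_\OO\left(\left|\dd{u_1}{\bar z_1}-\dd{\overline{u_2}}{z_2}\right|^2+\left|\dd{u_1}{\bar z_2}+\dd{\overline{u_2}}{z_1}\right|^2\right)dV,\]
whose cross terms reproduce $-2\,\mathrm{Re}$ of the $(0,2)$-Jacobian, and whose square terms rebuild $\|\db u\|^2$; feeding in the $\omega_1$-identity to absorb the holomorphic energy, one obtains the exact identity
\[\mathcal E(u)=\int_\OO u^*\omega_1\wedge\omega_1+\mathrm{Re}\int_\OO u^*\sigma\wedge\sigma+2\,D(u).\]
By property (1), $D(u)\ge0$, with equality precisely when $u$ is \pr. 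The first two summands depend only on $u|_{\bO}$; hence for any competitor $u$ homotopic to $f$ relative to $\bO$ (so $u|_{\bO}=f|_{\bO}$) they coincide with the corresponding terms for $f$. Since $f$ is \pr\ we have $D(f)=0$, and therefore $\mathcal E(u)-\mathcal E(f)=2\,D(u)\ge0$, with equality iff $u$ is \pr. This gives the asserted minimality (in fact among all maps sharing the boundary values of $f$).

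The step requiring care is the algebraic cancellation in the third paragraph: one must check that, after inserting the two closed-form identities, no residual term in the holomorphic derivatives $\dd{u_l}{z_k}$ survives, i.e.\ that $\|\partial u\|^2$ is entirely converted into the $\omega_1$-boundary term plus a copy of $\|\db u\|^2$. This is exactly where the hyperk\"ahler structure enters: the single K\"ahler form $\omega_1$ supplies only $\|\partial u\|^2-\|\db u\|^2$ (this is Lichnerowicz's ingredient, which alone would force $J_1$-holomorphicity), whereas it is the complex symplectic form $\sigma=\omega_2+i\omega_3$ that provides precisely the cross term needed to complete the squares in $D(u)$ (the Chen--Li ingredient). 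A single complex structure $J_p$ cannot do this, which is why \pr\ --- rather than merely $J_p$-holomorphic --- functions are the energy minimizers. Fixing the numerical constants and recording the precise boundary dependence of the two Stokes integrals are then routine.
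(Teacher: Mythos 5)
Your argument is correct and is essentially the paper's own proof: it is the Lichnerowicz--Chen--Li decomposition $\mathcal E(u)=(\text{homotopy invariant})+(\text{non-negative defect vanishing iff }u\in\mathcal R(\OO))$, with your $\int_\OO u^*\omega_1\wedge\omega_1+\mathrm{Re}\int_\OO u^*\sigma\wedge\sigma$ playing the role of the paper's $-\mathcal K(u)$ and $2D(u)$ playing the role of $\frac14\mathcal I(u)$. The only (welcome) difference is that you make the two ingredients explicit --- the homotopy invariance via Stokes applied to the exact forms $\omega_1$ and $\sigma=dz_1\wedge dz_2$, and the defect directly from the two scalar $\psi$-regularity equations --- where the paper cites the computation of Chen and Li.
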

  \begin{proof}We repeat arguments of Lichnerowicz, Chen and Li.  
  Let $i_1=i, i_2=j, i_3=k$ and let
  \[\mathcal K(f)=\int_\OO \sum_{\alpha=1}^3\langle J_\alpha,f^*L_{i_\alpha}\rangle dV,\quad 
  \mathcal I(f)=\frac12\int_\OO\|df+\sum_{\alpha=1}^3 L_{i_\alpha}\circ df\circ J_\alpha\|^2 dV.\]
  Then $\mathcal K(f)$ is a homotopy invariant of $f$ and $\mathcal I(f)=0$ if and only if $f\in\mathcal R(\OO)$. A computation similar to that made by Chen and Li\cite{CL} gives
  \[\mathcal E(f)+\mathcal K(f)=\frac14\mathcal I(f)\ge0.\]
  From this the result follows immediately.
	\end{proof}

\subsection{A criterion for holomorphicity}
  We now come to our main result. Let $f:\OO\rightarrow\HH$ be a function of class $C^1(\cO)$.
  
  \begin{theorem}\label{criterion}
  Let $A=(a_{\alpha\beta})$ be the $3\times 3$ matrix with entries 
  \[a_{\alpha\beta}=-\int_\OO \langle J_\alpha,f^*L_{i_\beta}\rangle dV.\] Then
  \begin{enumerate}
  \item
  {$f$ is \pr\ if and only if $\mathcal E(f)=tr A$.}
  \item
  {If $f\in\mathcal R(\OO)$, then $A$ is real, symmetric and \[tr A\ge\lambda_1=\max\{\text{eigenvalues of } A\}.\] It follows that $\det(A-(tr A)I_3)\le0$.}
  \item
  {If $f\in\mathcal R(\OO)$, then $f$ belongs to some space $Hol_p(\OO,\HH)$ if and only if $\mathcal E(f)=tr A=\lambda_1$ or, equivalently, $\det(A-(tr A)I_3)=0$.}
  \item
  {If $\mathcal E(f)=tr A=\lambda_1$,  $X_p=(p_1,p_2,p_3)$ is a unit eigenvector of $A$ relative to the largest eigenvalue $\lambda_1$ if and only if $f\in Hol_p(\OO,\HH)$.}
  \end{enumerate}
  \end{theorem}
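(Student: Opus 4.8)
The plan is to unwind each part of the theorem in terms of the fundamental identity established in Proposition~\ref{energy}, namely $\mathcal E(f)+\mathcal K(f)=\tfrac14\mathcal I(f)\ge 0$, together with the pointwise meaning of the inner products $\langle J_\alpha, f^*L_{i_\beta}\rangle$. Since $\mathcal K(f)=-\mathrm{tr}\,A$ by definition of the entries $a_{\alpha\beta}$, the first part is immediate: $f$ is $\psi$-regular iff $\mathcal I(f)=0$ iff $\mathcal E(f)+\mathcal K(f)=0$ iff $\mathcal E(f)=\mathrm{tr}\,A$. The only thing to verify carefully is the sign bookkeeping so that $\mathcal K(f)=-\mathrm{tr}\,A$.

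For part~(2), the goal is to analyze the matrix $A$ under the assumption $f\in\mathcal R(\OO)$. First I would write each $a_{\alpha\beta}$ as an integral of a pointwise expression and show, using the relation $L_{i_\beta}\circ J_\beta$ and the $\psi$-regularity equations, that the matrix is real and symmetric; symmetry should come from the fact that for $\psi$-regular $f$ the pairing $\langle J_\alpha, f^*L_{i_\beta}\rangle$ can be rewritten symmetrically in $\alpha,\beta$. To obtain $\mathrm{tr}\,A\ge\lambda_1$, the key observation is that for \emph{any} unit vector $X_p=(p_1,p_2,p_3)$ the quadratic form $X_p^T A X_p$ equals $-\int_\OO\langle J_p, f^*L_p\rangle dV$, which is, up to sign and scaling, a Lichnerowicz-type invariant measuring failure of $J_p$-holomorphicity; applying the same energy inequality of Proposition~\ref{energy} with the \emph{single} structure $J_p$ in place of the triple should yield $\mathcal E(f)+X_p^T A X_p\ge 0$ with equality exactly when $f\in Hol_p(\OO,\HH)$. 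Since $\mathrm{tr}\,A=\mathcal E(f)$ for $\psi$-regular $f$ by part~(1), this gives $X_p^T A X_p\le \mathrm{tr}\,A$ for every unit $p$, hence $\lambda_1\le \mathrm{tr}\,A$; and $\det(A-(\mathrm{tr}\,A)I_3)\le 0$ follows because all three eigenvalues are $\le\mathrm{tr}\,A$, making each factor of the determinant nonpositive and their product (three nonpositive factors) nonpositive.

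Parts~(3) and~(4) are then read off from the equality case. By the single-structure inequality above, $f\in Hol_p(\OO,\HH)$ iff $\mathcal E(f)+X_p^T A X_p=0$, i.e.\ $X_p^T A X_p=-\mathcal E(f)$; but here I must reconcile the sign, since $X_p^T A X_p\le\mathrm{tr}\,A=\mathcal E(f)$ suggests the extremal value is $+\mathcal E(f)$, attained at $\lambda_1$. The resolution is that the energy inequality for a single structure should read $\mathcal E(f)-X_p^T A X_p\ge 0$ with equality iff $f\in Hol_p(\OO,\HH)$, so that $J_p$-holomorphicity corresponds to $X_p^T A X_p=\mathcal E(f)=\mathrm{tr}\,A$, which forces $\lambda_1=\mathrm{tr}\,A$ and identifies $X_p$ as an eigenvector for $\lambda_1$. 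Thus $f$ lies in \emph{some} $Hol_p(\OO,\HH)$ iff the maximum of the quadratic form reaches $\mathrm{tr}\,A$, i.e.\ $\lambda_1=\mathrm{tr}\,A$, equivalently $\det(A-(\mathrm{tr}\,A)I_3)=0$; and the maximizing direction is precisely the eigenvector $X_p$ for $\lambda_1$, giving~(4).

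The main obstacle I anticipate is establishing the single-structure energy inequality cleanly and pinning down its sign convention, since everything downstream hinges on the precise statement $\mathcal E(f)-X_p^T A X_p\ge 0$ with the correct equality condition. This requires adapting the Lichnerowicz argument from the full hyperk\"ahler triple $(J_1,J_2,J_3)$ to the single K\"ahler structure $J_p$ on the source paired with $L_p$ on the target, and checking that the homotopy-invariant $\langle J_p,f^*L_p\rangle$ integrates to $X_p^T A X_p$ exactly. Once the sign is fixed, the spectral-theory consequences (real symmetric $A$, $\lambda_1\le\mathrm{tr}\,A$, the determinant criterion, and the eigenvector characterization) follow by elementary linear algebra.
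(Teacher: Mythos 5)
Your proposal is correct and follows essentially the same route as the paper: it reduces everything to the single-structure identity $\mathcal E(f)-X_p A X_p^T=\frac14\mathcal I_p(f)\ge 0$ (with equality iff $f\in Hol_p(\OO,\HH)$), identifies $X_pAX_p^T$ with $-\int_\OO\langle J_p,f^*L_p\rangle\,dV$ by bilinearity, and then derives parts (2)--(4) by the spectral theory of the real symmetric matrix $A$, which is exactly the argument sketched in Section \ref{sec:SketchOfProofOfTheCriterion}. The sign convention you settle on in the end is the correct one, and your treatment of the symmetry of $A$ (a direct computation from the form of the Jacobian of a \pr\ function) is at the same level of detail as the paper's.
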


\subsection{The existence of non-holomorphic \pr\ maps}
\label{sec:TheExistenceOfNonHolomorphicPrMaps}
The criterion can be applied to show that on every domain $\OO$ in $\HH$, there exist \pr\ functions that are not holomorphic.

  \begin{example}
  Let $f=z_1+z_2+\bar z_1+(z_1+z_2+\bar z_2)j$. Then $f$ is \pr, but not holomorphic, since  
  on the unit ball $B$ in $\CC^2$, $f$ has energy $\mathcal E(f)=6$ and the matrix $A$ of the theorem is
  	\[A=
\begin{bmatrix}
	2&0&0\\
	0&2&0\\
	0&0&2
\end{bmatrix}.\]Therefore $\mathcal E(f)=tr A>2=\lambda_1$.
    \end{example}
    
In the preceding example, the Jacobian matrix of the function has even rank, a necessary condition for a holomorphic map. In the case when the rank is odd, the non-holomorphicity follows immediately. For example,     
  $g=z_1+\bar z_1+\bar z_2j$  is \pr\ (on any $\OO$) but not $J_p$-holomorphic, for any $p$, since $rk J_\CC(f)$ is odd.
   
    \begin{example}
  The linear, \pr\ functions constitute a $\HH$-module of dimension 3 over $\HH$, generated e.g. by the set $\{z_1+z_2j,z_2+z_1j,\bar z_1+\bar z_2j\}$. An element \[f=(z_1+z_2j)q_1+(z_2+z_1j)q_2+(\bar z_1+\bar z_2j)q_3\] is holomorphic if and only if the  coefficients $q_1=a_1+a_2j$, $q_2=b_1+b_2j$, $q_3=c_1+c_2j$ satisfy the $6^{th}$-degree real homogeneous equation 
\[{\det(A-(tr A)I_3)=0}\]
  obtained after integration on $B$. The explicit expression of this equation is given in the Appendix. So   ``almost all'' (linear) \pr\ functions are non-holomorphic.
  \end{example}

\begin{example}
    A positive example (with $p\ne i,j,k$). Let $h=\bar z_1+(z_1+\bar z_2)j$. On the unit ball $h$ has energy $3$ and the matrix $A$ is 
    \[A=\left[\begin{array}{ccc}
	-1&0&2\\
	0&2&0\\
	2&0&2
	\end{array}\right]\]
		 then $\mathcal E(h)=tr A$ is equal to the (simple) largest eigenvalue, with unit eigenvector
	$X=\frac1{\sqrt5}(1,0,2)$.
		It follows that $h$ is $J_p$-holomorphic with $p=\frac1{\sqrt5}(i+2k)$, i.e. it satisfies the equation
	\[df+\tfrac15(i+2k)(J_1^*+2J_3^*)(df)=0.\]
	\
  \end{example}

  \begin{example}
  We give a quadratic example. Let $f=|z_1|^2-|z_2|^2+\bar z_1\bar z_2j$. $f$ has energy $2$ on $B$ and the matrix $A$ is 
    \[A=\begin{bmatrix}
	-2/3&0&0\\
	0&4/3&0\\
	0&0&4/3
	\end{bmatrix}\]
	Then $f$ is \pr\ but not holomorphic w.r.t.\  any complex structure $J_p$. 
  \end{example}

\subsection{Other applications of the criterion}
\label{sec:OtherApplications}

{ }1)
If $f\in Hol_p(\OO,\HH)\cap Hol_{p'}(\OO,\HH)$ for {two} $\RR$-independent $p,p'$, then $X_p,X_{p'}$ are independent eigenvectors relative to $\lambda_1$.
Therefore the eigenvalues of the matrix $A$ are  $\lambda_1=\lambda_2=-\lambda_3$.

If $f\in Hol_p(\OO,\HH)\cap Hol_{p'}(\OO,\HH)\cap Hol_{p''}(\OO,\HH)$ for {three} $\RR$-independent $p,p',p''$
then $\lambda_1=\lambda_2=\lambda_3=0\Rightarrow A=0$ and therefore $f$ has energy $0$ and $f$ is a (locally) \emph{constant} map.

2) If $\OO$ is connected, then 
   $Hol_p(\OO,\HH)\cap Hol_{p'}(\OO,\HH)$ ($p\ne\pm p'$) contains only \emph{affine} maps (cf. Sommese\cite{So}).
   
   We can assume  $p=i$, $p'=j$ since  in view of property 3) of Remark \ref{sec:properties} we can suppose $p$ and $p'$ orthogonal quaternions and then we can rotate the space of imaginary quaternions. Let $f\in Hol_i(\OO,\HH)\cap Hol_j(\OO,\HH)$ and $a=\left(\dd{f_1}{z_1},\dd{f_2}{z_1}\right)$, $b=\left(\overline{\dd{f_2}{z_2}},-\overline{\dd{f_1}{z_2}}\right)$. 
Since $f\in Hol_i(\OO,\HH)$, the matrix $A$ is obtained after integration on $\OO$ of the matrix
  \[\begin{bmatrix}
  |a|^2+|b|^2&0&0\\
  0&2Re\langle a,b\rangle&-2Im\langle a,b\rangle\\
  0&  -2Im\langle a,b\rangle&-2Re\langle a,b\rangle
  \end{bmatrix}\]
  where $\langle a,b\rangle$ denotes the standard hermitian product of $\CC^2$.
    
  Since $f\in Hol_j(\OO,\HH)$, we have $\int_\OO Im\langle a,b\rangle dV=0$ and $\int_\OO |a-b|^2 dV=0$. Therefore $a=b$ on $\OO$.
Then $a$ is holomorphic and anti-holomorphic w.r.t.\ the standard structure $J_1$. This means that $a$ is constant on $\OO$ and $f$ is an {affine} map with linear part of the form \[(a_1z_1-\bar a_2z_2)+(a_2z_1+\bar a_1z_2)j\] i.e. the right multiplication of $q=z_1+z_2j$ by the quaternion $a_1+a_2j$.

3)
We can give a classification of \pr\ functions based on the dimension of the set of complex structures w.r.t.\ which the function is holomorphic.
  Let $\OO$ be connected. Given a function $f\in\mathcal R(\OO)$, we set \[\mathcal J(f)=\{p\in \Sp^2\ |\ f\in Hol_p(\OO,\HH)\}.\]
    
  The space $\mathcal R(\OO)$ of \pr\ functions is the disjoint union of subsets of functions of the following four types:
  \renewcommand{\theenumi}{\roman{enumi}}
  \begin{enumerate}
  \item
    $f$ is $J_p$-holomorphic for three $\RR$-independent structures\\ $\Longrightarrow$ $f$ is a constant and $\mathcal J(f)=\Sp^2$.
  \item
    $f$ is $J_p$-holomorphic for exactly two $\RR$-independent structures\\ $\Longrightarrow$ $f$ is a \pr, invertible affine map and $\mathcal J(f)$ is an equator $S^1\subset \Sp^2$.
  \item
    $f$ is $J_p$-holomorphic for exactly one structure $J_p$ (up to sign of $p$) $\Longrightarrow \mathcal J(f)$ is a two-point set $S^0$.
   \item
    $f$ is \pr\ but not $J_p$-holomorphic w.r.t.\  any complex structure $\Longrightarrow \mathcal J(f)=\emptyset$.
  \end{enumerate}  


We will return in a subsequent paper to the application of the criterion  to the study of $\psi$-\emph{biregular} functions, which are invertible \pr\ functions with \pr\ inverse (see Kr\'olikowski and Porter\cite{KP} for the class of \emph{biregular} functions). This class contains as a proper subset the invertible holomorphic maps.

\section{Sketch of proof of Theorem \ref{criterion}}
\label{sec:SketchOfProofOfTheCriterion}
If $f\in\mathcal R(\OO)$, then $\mathcal E(f)=-\mathcal K(f)=tr A$.
Let \[\mathcal I_p(f)=\frac12\int_\OO\|df+ L_{p}\circ df\circ J_p\|^2 dV.\]
 Then we obtain, as in Chen and Li\cite{CL}
\[\mathcal E(f)+\int_\OO \langle J_p,f^*L_{p}\rangle dV=\frac14\mathcal I_p(f).\]
If $X_p=(p_1,p_2,p_3)$, then 
\[XAX^T=\sum_{\alpha,\beta}p_\alpha p_\beta a_{\alpha\beta}=-\int_\OO\langle \sum_\alpha p_\alpha J_\alpha,f^*\sum_\beta p_\beta L_{i_\beta}\rangle dV\]\[=-\int_\OO \langle J_p,f^*L_{p}\rangle dV=\mathcal E(f)-\frac14\mathcal I_p(f).\]

Then $tr A=\mathcal E(f)=XAX^T+\frac14\mathcal I_p(f)\ge XAX^T$, with equality if and only if $\mathcal I_p(f)=0$ i.e if and only if $f$ is a $J_p$-holomorphic map.


Let $M_\alpha$ ($\alpha=1,2,3$) be the matrix associated to $J^*_\alpha$ w.r.t.\  the basis $\{d\bar z_1,dz_1,d\bar z_2,dz_2\}$.
The entries of the matrix $A$ can be computed by the formula
\[a_{\alpha\beta}=-\int_\OO \langle J_\alpha,f^*L_{i_\beta}\rangle dV=
\frac12\int_\OO tr(\overline{B_\alpha}^T C_\beta)dV\]
where $B_\alpha=M_\alpha J_\CC(f)^T$ for $\alpha=1,2$, $B_\alpha=-M_\alpha J_\CC(f)^T$ for $\alpha=3$ and $C_\beta=J_\CC(f)^T M_\beta$ for $\beta=1,2,3$.

A direct computation shows how from the particular form of the Jacobian matrix of a \pr\ function it follows the symmetry property of $A$.

\section*{Appendix}
\label{sec:Appendix}
We give the explicit expression of the $6^{th}$-degree real homogeneous equation  satisfied by the complex coefficients of a linear $J_p$-holomorphic \pr\ function.

{$\frac1{16}det(A-(tr A)I_3)=a_1 a_2 b_2 c_1^2 \bar b_1 - a_1 a_2 b_1 c_1 c_2 \bar b_1 - 
    a_1^2 b_2 c_1 c_2 \bar b_1 + a_1^2 b_1 c_2^2 \bar b_1 - 
    a_1 c_1^2 \bar a_1 \bar b_1^2 - a_1 c_1 c_2 \bar a_2 \bar b_1^2 + 
    a_2^2 b_2 c_1^2 \bar b_2 - a_2^2 b_1 c_1 c_2 \bar b_2 - 
    a_1 a_2 b_2 c_1 c_2 \bar b_2 + a_1 a_2 b_1 c_2^2 \bar b_2 - 
    a_2 c_1^2 \bar a_1 \bar b_1 \bar b_2 - a_1 c_1 c_2 \bar a_1 \bar b_1 \bar b_2 - 
    a_2 c_1 c_2 \bar a_2 \bar b_1 \bar b_2 - a_1 c_2^2 \bar a_2 \bar b_1 \bar b_2 - 
    a_2 c_1 c_2 \bar a_1 \bar b_2^2 - a_2 c_2^2 \bar a_2 \bar b_2^2 + 
    a_1 a_2 b_1 b_2 c_1 \bar c_1 - a_1^2 b_2^2 c_1 \bar c_1 - 
    a_1 a_2 b_1^2 c_2 \bar c_1 + a_1^2 b_1 b_2 c_2 \bar c_1 - 
    2 a_1 b_1 c_1 \bar a_1 \bar b_1 \bar c_1 - a_1 b_2 c_1 \bar a_2 \bar b_1 \bar c_1 - 
    a_1 b_1 c_2 \bar a_2 \bar b_1 \bar c_1 - a_2 b_1 c_1 \bar a_1 \bar b_2 \bar c_1 - 
    2 a_1 b_2 c_1 \bar a_1 \bar b_2 \bar c_1 + a_1 b_1 c_2 \bar a_1 \bar b_2 \bar c_1 - 
    2 a_2 b_2 c_1 \bar a_2 \bar b_2 \bar c_1 + a_2 b_1 c_2 \bar a_2 \bar b_2 \bar c_1 - 
    a_1 b_2 c_2 \bar a_2 \bar b_2 \bar c_1 + c_1 \bar a_1 \bar a_2 \bar b_1 \bar b_2 \bar c_1 + 
    c_2 \bar a_2^2 \bar b_1 \bar b_2 \bar c_1 - c_1 \bar a_1^2 \bar b_2^2 \bar c_1 - 
    c_2 \bar a_1 \bar a_2 \bar b_2^2 \bar c_1 - a_1 b_1^2 \bar a_1 \bar c_1^2 - 
    a_1 b_1 b_2 \bar a_2 \bar c_1^2 + b_1 \bar a_1 \bar a_2 \bar b_2 \bar c_1^2 + 
    b_2 \bar a_2^2 \bar b_2 \bar c_1^2 + a_2^2 b_1 b_2 c_1 \bar c_2 - 
    a_1 a_2 b_2^2 c_1 \bar c_2 - a_2^2 b_1^2 c_2 \bar c_2 + 
    a_1 a_2 b_1 b_2 c_2 \bar c_2 - a_2 b_1 c_1 \bar a_1 \bar b_1 \bar c_2 + 
    a_1 b_2 c_1 \bar a_1 \bar b_1 \bar c_2 - 2 a_1 b_1 c_2 \bar a_1 \bar b_1 \bar c_2 + 
    a_2 b_2 c_1 \bar a_2 \bar b_1 \bar c_2 - 2 a_2 b_1 c_2 \bar a_2 \bar b_1 \bar c_2 - 
    a_1 b_2 c_2 \bar a_2 \bar b_1 \bar c_2 - c_1 \bar a_1 \bar a_2 \bar b_1^2 \bar c_2 - 
    c_2 \bar a_2^2 \bar b_1^2 \bar c_2 - a_2 b_2 c_1 \bar a_1 \bar b_2 \bar c_2 - 
    a_2 b_1 c_2 \bar a_1 \bar b_2 \bar c_2 - 2 a_2 b_2 c_2 \bar a_2 \bar b_2 \bar c_2 + 
    c_1 \bar a_1^2 \bar b_1 \bar b_2 \bar c_2 + 
    c_2 \bar a_1 \bar a_2 \bar b_1 \bar b_2 \bar c_2 - a_2 b_1^2 \bar a_1 \bar c_1 \bar c_2 - 
    a_1 b_1 b_2 \bar a_1 \bar c_1 \bar c_2 - a_2 b_1 b_2 \bar a_2 \bar c_1 \bar c_2 - 
    a_1 b_2^2 \bar a_2 \bar c_1 \bar c_2 - b_1 \bar a_1 \bar a_2 \bar b_1 \bar c_1 \bar c_2 - 
    b_2 \bar a_2^2 \bar b_1 \bar c_1 \bar c_2 - b_1 \bar a_1^2 \bar b_2 \bar c_1 \bar c_2 - 
    b_2 \bar a_1 \bar a_2 \bar b_2 \bar c_1 \bar c_2 - a_2 b_1 b_2 \bar a_1 \bar c_2^2 - 
    a_2 b_2^2 \bar a_2 \bar c_2^2 + b_1 \bar a_1^2 \bar b_1 \bar c_2^2 + 
    b_2 \bar a_1 \bar a_2 \bar b_1 \bar c_2^2=0$}


\begin{thebibliography}{0}
\bibitem{CL} J. Chen J. and Li, {\it J. Differential Geom.} {\bf 55} 355--384 (2000).
  
\bibitem{J} D. Joyce,  {\it Quart. J. Math. Oxford} {\bf 49} 129--162 (1998).

\bibitem{KP} W. Kr\'olikowski and R.M. Porter, {\it Ann. Polon. Math.} {\bf 59} 53--64 (1994).

\bibitem{L} A. Lichnerowicz, {\it Symp. Math.} {\bf III}, Bologna, 341–-402 (1970).

\bibitem{No}  K. N\=ono, {\it Bull. Fukuoka Univ. Ed. III} {\bf 35} 11--17 (1985).
  

\bibitem{SV} M.V. Shapiro and N.L. Vasilevski, {\it Complex Variables Theory Appl.} {\bf 27} no.1 17--46 (1995).

\bibitem{So} A.J. Sommese,  {\it Math. Ann.} {\bf 212}  191--214 (1975).

\bibitem{Su} A. Sudbery, {\it Mat. Proc. Camb. Phil. Soc.} {\bf 85} 199--225 (1979).

\end{thebibliography}
\end{document}